\documentclass[12pt,reqno]{article}

\usepackage{graphicx}
\usepackage{amscd}
\usepackage{color}
\usepackage{graphics,amsmath,amssymb}
\usepackage{amsthm}
\usepackage{amsfonts}

\usepackage{float}
\usepackage{tikz}
\usepackage{forest}
\usepackage{subfigure}

\usepackage{verbatim}
\usepackage{enumerate}
\usepackage{hyperref}
\usepackage{listings}

\usepackage[blocks]{authblk}

\usepackage{multicol}
\usepackage{indentfirst}
\usepackage{changepage}
\usepackage{enumitem,kantlipsum}
\usepackage{thm-restate}
\usepackage{breqn}

\usepackage{tikz}
\usepackage{tikz-qtree}

\theoremstyle{plain}
\newtheorem{theorem}{Theorem}

\newtheorem{corollary}[theorem]{Corollary}
\newtheorem{lemma}[theorem]{Lemma}

\theoremstyle{definition}
\newtheorem{definition}{Definition}
\newtheorem{example}{Example}

\theoremstyle{remark}
\newtheorem*{remark}{Remark}

\newcounter{row}
\newcounter{col}

\newcommand\setrow[4]{
  \setcounter{col}{1}
  \foreach \n in {#1, #2, #3, #4} {
    \edef\x{\value{col} - 0.5}
    \edef\y{4.5 - \value{row}}
    \node[anchor=center] at (\x, \y) {\n};
    \stepcounter{col}
  }
  \stepcounter{row}
}

\setlength{\parindent}{20pt}

\newlength{\stretchlen}\setlength{\stretchlen}{0.7em}
\def\splitterm{\_}
\newcommand{\stretchit}[1]{\leavevmode\realstretch#1\_}
\def\realstretch#1{%
    \def\temp{#1}%
    \ifx\temp\splitterm
    \else
    \hbox to \stretchlen{\hss#1\hss}\expandafter\realstretch
\fi}

\begin{document}

\title{Fractal Word Search: How Deep to Delve}
\author[1]{Klára Churá}
\author[2]{Tanya Khovanova}
\affil[1]{Tufts University}
\affil[2]{Massachusetts Institute of Technology}
\maketitle

\section{Abstract}

We look at the puzzle \textit{In the Details} which appeared in the 2013 MIT Mystery Hunt and which gained fame as the \textit{fractal word search}. This seemingly impossible puzzle, whose solution could not fit the memory of a modern computer if the puzzle were solved using a brute-force approach, requires an understanding of its fundamental structure to be cracked. In this paper, we study fractal word searches in a general setting, where we consider one- and two-dimensional word searches with alphabets of any length and replacement rules of any size. We prove that the puzzle is solvable within a finite number of steps under this generalization and give an explicit upper bound on the latest level on which a word of a given length can appear for the first time in a given direction.

\section{Introduction}
\label{section:introduction}

\subsection{Background}

In the 2013 MIT Mystery Hunt, a puzzle titled \textit{In the Details} appeared, authored by Derek Kisman. The puzzle, which can also be found at \cite{kisman2013puzzle}, is written out in Figure~\ref{fig:puzzle}.

\begin{figure}
\begin{center}
\stretchit{TWELEVELTWONSHELMUMUOERAIYRANL}\par
\stretchit{QAPIUNPIQAYDPEPIRPRPKVOYESOYOR}\par
\stretchit{ELRATFDTELDTTFDTBWNLMUTFONYDWJ}\par
\stretchit{PIOYJMHAPIHAJMHAAOORRPJMYDANFC}\par
\stretchit{MUOZCGTFBWIRYDHIRAIRTFNCUENCUE}\par
\stretchit{RPVQUHJMAOHKANJUOYHKJMZKBNZKBN}\par
\stretchit{IRONSHOZGOTFUEELTFOEELUEYDOETF}\par
\stretchit{HKYDPEVQDNJMBNPIJMKVPIBNANKVJM}\par
\stretchit{BWIYNLTFSHHIELTWGOYDONDTYDHIOE}\par
\stretchit{AOESORJMPEJUPIQADNANYDHAANJUKV}\par
\stretchit{SHDTYDRPBWUEBWIYTWTWTFYDMUELMU}\par
\stretchit{PEHAANAJAOBNAOESQAQAJMANRPPIRP}\par
\stretchit{ONTWELBWLMSHELTFUEBWBWLMOZEVHI}\par
\stretchit{YDQAPIAOGIPEPIJMBNAOAOGIVQUNJU}\par
\stretchit{DTCGUEYDRPEVNCIREVIRTWUEUETWON}\par
\stretchit{HAUHBNANAJUNZKHKUNHKQABNBNQAYD}\par
\stretchit{IRUERAMUTFELTWONTFOEOEEYDTNLYD}\par
\stretchit{HKBNOYRPJMPIQAYDJMKVKVHWHAORAN}\par
\stretchit{ELGORPNCTFDTYDSHYDELPKTFOZRACG}\par
\stretchit{PIDNAJZKJMHAANPEANPIDFJMVQOYUH}\par
\stretchit{DTMUWJOETFYDELMUMUGORAONIRDTCG}\par
\stretchit{HARPFCKVJMANPIRPRPDNOYYDHKHAUH}\par
\end{center}

\begin{changemargin}{-2em}{-2em}
\begin{multicols}{4}
\begin{description}
\itemsep-0.5em
    \item BOUNDARY
    \item BROWNIAN
    \item CAUCHY
    \item CURLICUE
    \item DE RHAM
    \item DIMENSION
    \item ESCAPE
    \item HAUSDORFF
    \item HENON
    \item HILBERT
    \item HURRICANE
    \item ITERATE
    \item JULIA
    \item LEIBNIZ
    \item LEVEL ONE
    \item LEVEL TWO
    \item LEVY DRAGON
    \item LYAPUNOV
    \item MANDELBROT
    \item NEURON
    \item NURNIE
    \item POWER LAW
    \item RAUZY
    \item RIVER
    \item SCALING
    \item SPACE
    \item STRANGE
    \item TAKAGI
    \item TECTONICS
    \item T-SQUARE
    \item WIENER
    \item YO DAWG
\end{description}
\end{multicols} 
\end{changemargin}

\begin{center}
\begin{tabular}{ c c c c c c c c }
\textunderscore & \textunderscore & \textunderscore & \textunderscore & \textunderscore & \textunderscore & \textunderscore & \textunderscore \\
\end{tabular}
\end{center}
\caption{In the Details}
\label{fig:puzzle}
\end{figure}

This puzzle has since gained the name \textit{fractal word search}. Let us have a look at it together to discover why!

First, we notice that the puzzle looks like a word search. In a word search, one can find words from a given list in a given grid looking horizontally, vertically, or diagonally, in eight directions overall. After all the words have been found and their letters crossed out, the leftover letters in the grid spell out the answer or at least a hint on it. Traditionally, the number of blanks in the final row of the puzzle represents the length of the sought answer, which in the case of our puzzle is a length of $8$.

However, this puzzle is not a regular word search since only $6$ words from the list can be found in the grid. That means that the puzzle has a hidden secret, but what is this secret? We can observe that the words in the list are related to fractals; this is the first big hint. The second hint is that the grid shows many repeating $2 \times 2$ blocks, of which there are exactly $26$ kinds; this suggests that each block represents a letter in the alphabet and that each can be replaced with this letter, yielding a grid smaller by a factor of two in each dimension. Finally, to make things a little easier, there are the words $LEVEL TWO$ in the upper left corner of the original grid. Could there be the words $LEVEL ONE$ on the first line of the smaller grid we are looking for? This happens to be the case, and the matching between letters and two-by-two blocks is as in Figure~\ref{fig:replacementrules}.

\begin{figure}
\begin{changemargin}{-2em}{-2em}
\begin{center}
\begin{tabular}{ l l l l l l l l l l l l l }
A & B & C & D & E & F & G & H & I & J & K & L & M \\

TF & GO & IR & HI & EL & CG & RP & UE & BW & PK & OZ & TW & NL \\

JM & DN & HK & JU & PI & UH & AJ & BN & AO & DF & VQ & QA & OR \\ [1em]

N & O & P & Q & R & S & T & U & V & W & X & Y & Z \\

SH & ON & RA & WJ & YD & MU & DT & OE & EV & NC & EY & IY & LM \\

PE & YD & OY & FC & AN & RP & HA & KV & UN & ZK & HW & ES & GI \\
\end{tabular}
\end{center}
\end{changemargin}
\caption{Matching between letters and two-by-two blocks}
\label{fig:replacementrules}
\end{figure}

That is, level one is represented by the $11 \times 15$ grid in Figure~\ref{fig:levelone}.

\begin{figure}
\def\realstretch#1{%
    \def\temp{#1}%
    \ifx\temp\splitterm
    \else
    \hbox to \stretchlen{\hss#1\hss}\expandafter\realstretch
\fi}
\begin{center}
\stretchit{LEVELONESSUPYPM}\par
\stretchit{EPATETATIMSAORQ}\par
\stretchit{SKFAICRDPCAWHWH}\par
\stretchit{CONKBAHEAUEHRUA}\par
\stretchit{IYMANDELBROTRDU}\par
\stretchit{NTRGIHIYLLARSES}\par
\stretchit{OLEIZNEAHIIZKVD}\par
\stretchit{TFHRGVWCVCLHHLO}\par
\stretchit{CHPSAELOAUUXTMR}\par
\stretchit{EBGWATRNREJAKPF}\par
\stretchit{TSQUARESSBPOCTF}\par
\end{center}
\caption{Level one}
\label{fig:levelone}
\end{figure}

This level contains $18$ more words from the list, but multiple words are still missing. Hence, we are motivated to continue to level three and perhaps beyond in a search for the rest. Going back to level two and replacing its letters with blocks, we find an additional three words from the list on level three and one word on level four. However, even level four is already a bit too big for inspection, and since the subsequent levels grow in size exponentially, there is no way we could solve the puzzle in reasonable time using the brute-force approach of repeatedly replacing all letters and looking for words on every vertical, horizontal, and diagonal.

Thus, we need to step back and look at how the puzzle works so that we can come up with a smarter approach. How do fractal puzzles work? How deep can a word be hidden that does not appear on any previous level, and is this affected by the characteristics of the original grid, such as the size of the alphabet, the lengths of the sides of the replacement rules, or the length of the word we are looking for? Can the search for a word hidden very deep be done in a more optimal way than by writing out all the levels? These questions will be the very focus of our paper.

\subsection{Overview}

This subsection covers our main results. For precise definitions of any of the notation below, see the preliminaries in Section~\ref{section:preliminaries}.

First, in Section~\ref{section:onedimension}, we gain an intuition behind Kisman's puzzle by considering its equivalent in one dimension, with an $n$-letter alphabet and $b$-letter replacement rules. We then define a function $\mathcal{W}_1$ on $n$, $b$, and the length $|w|$ of a given word $w$, and we prove in Theorem~\ref{thm:onedim} that $\mathcal{W}_1$ is an upper bound on $F_{1}(b,n,w)$, where $F_{1}(b,n,w)$ is a hypothetical function giving the actual latest level on which a specific word $w$ can first appear across all possible setups of level one, denoted $L_1$.

Having focused on the puzzle in one dimension, we use Section~\ref{section:twodimensions} to extend our understanding of the fractal word search to two dimensions. We prove in Theorem~\ref{thm:twodimvert} that the function $\mathcal{W}_1$ defined in Section~\ref{section:onedimension} can also be used as an upper bound for $F_{2}^{hv}(b,n,w)$, a function that provides the latest level on which a horizontal or vertical word $w$ can appear for the first time. Again, $n$ is the size of the alphabet, $b$ is the length of the side of the square replacement rules, and $w$ is the word in question.

The result we obtain in Theorem~\ref{thm:twodimvert}, however, does not extend to diagonal words. Hence, we define another function $\mathcal{W}_2$, which takes as its input the length of the alphabet $n$, the side-length of the square replacement rules $b$, and the length $|w|$ of the sought word $w$. We prove in Theorem~\ref{thm:twodimdiag} that this $\mathcal{W}_2$ is an upper bound on $F_{2}^{d}(b,n,w)$, where $F_{2}^{d}(b,n,w)$ gives the actual latest level on which a specific diagonal word $w$ can appear for the first time across all possible setups of $L_1$, given an $n$-letter alphabet and $\left(b \times b\right)$-letter replacement rules.

The rest of the paper is organized in the following way. In Section~\ref{section:preliminaries}, we codify our notation and provide any preliminary definitions. In Section~\ref{section:onedimension}, we prove Theorem~\ref{thm:onedim}. In Section~\ref{section:twodimensions}, we prove Theorems~\ref{thm:twodimvert} and \ref{thm:twodimdiag}. Finally, in Section~\ref{section:solution}, we use our results to solve the puzzle, and in Section~\ref{section:conclusion}, we evaluate our findings and discuss potential future topics of exploration.

\section{Preliminaries}
\label{section:preliminaries}

We consider an alphabet on $n$ letters and use upper-case Latin letters for the letters in the alphabet, e.g. $A$, $B$, or $C$. We then call any string of letters from the alphabet a \textit{word}. For instance, $CAT$ and $ARRRRGGG$ are both words constructed from the letters of an alphabet that contains at least the letters $A$, $C$, $G$, $R$, and $T$. We use an asterisk $*$ to identify an unknown letter in a word of a known length, e.g., $AT*$ could stand for $ATA$ or $ATT$ or any other $3$-letter word beginning with $AT$. Given a word $w$, we denote its length by $|w|$. We also refer to a generalized rectangle of letters as a \textit{block}.

In Kisman's word search, as seen in \cite{kisman2013solution}, words in most of the eight possible directions are permissible. However, to make the discussion easier, we will focus only on horizontal words from left to right, vertical words from top to bottom, and diagonal words from the upper-left to the lower right, and we will thus use the simplified terms \textit{horizontal}, \textit{vertical}, and \textit{diagonal} to refer to these specific directions. Note that the remaining five directions will be analogous to those just chosen.

In our alphabet, each letter has a \textit{replacement rule} of a given size; in this paper, we restrict ourselves to the study of one-dimensional \textit{grids}, where the letters have one-dimensional $b$-letter replacement rules, and two-dimensional grids, where individual letters have square replacement rules of $\left(b \times b\right)$ letters. Starting with a grid of letters, we can replace each letter in this grid according to its replacement rules and thus obtain a new, enlarged grid. We can formalize this notion by defining a projection $R(p)$, which maps a block of letters $p$ to its replacement based on the replacement rules of the individual letters of $p$. We refer to the starting grid as the $1$-st \textit{level} or $L_1$ and denote any consecutive $k$-th level as $L_k$, and $R(p)$ can be seen as a mapping between the individual levels. For example, $R(L_k) = L_{k+1}$, for $k \geq 1$.

\begin{example}
\label{example:alphabetonedim}
Take the $3$-letter alphabet consisting of the letters $A$, $B$, and $C$, where the replacement rules of these letters are $AB$, $AC$, and $BB$ respectively. If $L_1$ has the single letter $A$, we get the consecutive levels $L_2 = R(L_1) = R(A) = AB$, $L_3 = R(L_2) = R(AB)=ABAC$, and $L_4 = R(L_3) = R(ABAC)=ABACABBB$. Thus, we can catch a $CAB$ on $L_4$. The reader can also notice that with this setup, the word $CC$ can never appear.
\end{example}

We refer to a word $v$ on $L_{k-1}$ as the \textit{parent} of the word $w$ if $R(v)$ contains $w$ and $R(v')$ does not contain $w$ for any sub-word $v'$ of $v$. In this case, we call $w$ a \textit{child} of $v$.

\begin{example}
The parent of $CAB$ on $L_4$ in Example~\ref{example:alphabetonedim} is $BA$ on $L_3$.
\end{example}

Note that $R(p)$, defined above, does not necessarily have a well-defined inverse. Although a block of letters located in a particular place of the grid on a particular level has a unique parent, the same word in a different place might have a different parent. 

\begin{example}
Using the alphabet and replacement rules of Example~\ref{example:alphabetonedim}, $BA$ could be a child of $AA$, $AB$, $CA$, or $CB$.
\end{example}

The main interest of this article lies in determining the latest level on which a given word $w$ can appear for the first time across all the possible setups of $L_1$ if it ever appears. Formally, we can view this as a goal function $F(w)$ which takes a word as the input and outputs the corresponding latest level.

\begin{example}
Given the alphabet in Example~\ref{example:alphabetonedim}, we can examine when the word $A$ appears with different setups of $L_1$. If $L_1$ contains $A$, then $A$ appears for the first time there. If $L_1$ does not contain $A$ but contains $B$, then $A$ appears for the first time on $L_2$. If $L_1$ has only instances of the letter $C$, then $A$ appears for the first time on $L_3$. Thus, $F(A)=3$.
\end{example}

However, the above definition of $F(w)$ is too vague for the scope of this study, for it does not take into account the length of the alphabet $n$, the size of the replacement rules, the dimensionality of the grid, or the direction of the word when the grid is multi-dimensional. Hence, we make the function explicit by introducing $F_{1}(b,n,w)$ for the search of words $w$ in a one-dimensional grid where letters have $b$-letter replacement rules, as well as $F_{2}^{hv}(b,n,w)$ and $F_{2}^{d}(b,n,w)$ for the search of words $w$, horizontal/vertical and diagonal respectively, in a two-dimensional grid where letters have $\left(b \times b\right)$-letter replacement rules.

\section{One Dimension}
\label{section:onedimension}

In this section, we prove an upper bound on the latest level on which a word can appear for the first time in one dimension, with an $n$-letter alphabet and $b$-letter replacement rules. Many of the results given here are directly relevant to the exploration of vertical, horizontal, and diagonal words in the two-dimensional scenario.

We first make an observation that will help us better understand the transition between individual levels. This observation is the first step towards concluding that any fractal word search is finite.

\begin{lemma}
\label{lemma:firsttimelk}
If a word $w$ appears for the first time on $L_{k}$, then any parent of $w$ appears for the first time no earlier than on $L_{k-1}$. Moreover, at least one of the parents appears on $L_{k-1}$.
\end{lemma}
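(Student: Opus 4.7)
The plan is to treat the two assertions separately, both exploiting the fact that $R$ commutes with taking contiguous sub-blocks. For the first assertion, I would argue by contradiction. Suppose some parent $v$ of an occurrence of $w$ on $L_k$ also appears as a sub-word of some $L_j$ with $j \le k-2$. Applying $R$ to that earlier copy produces a copy of $R(v)$ inside $L_{j+1}$, and since $v$ is a parent of $w$ we know that $R(v)$ contains $w$. Thus $w$ appears in $L_{j+1}$, and since $j+1 \le k-1 < k$ this contradicts the hypothesis that $w$ first appears on $L_k$.

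For the second assertion, I would explicitly produce a parent living on $L_{k-1}$. Fix any occurrence of $w$ on $L_k$ and let $B$ be the contiguous block of letters on $L_{k-1}$ whose replacements cover the positions occupied by that occurrence of $w$; by construction $R(B)$ contains $w$. Iteratively discard the leftmost or rightmost letter of $B$ whenever doing so leaves a sub-block whose replacement still contains $w$. This process terminates in a sub-word $v$ of $L_{k-1}$ such that $R(v)$ contains $w$ while no proper sub-word $v'$ of $v$ has this property. By the definition given in the preliminaries, $v$ is a parent of $w$; since $v$ manifestly appears on $L_{k-1}$, the first assertion forces its first appearance to be exactly on $L_{k-1}$, which is what the second claim asserts.

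The main point to be careful about is verifying that the trimming procedure genuinely yields a word satisfying the parent definition: the definition demands minimality over \emph{all} sub-words, not merely over sub-words obtained by removing boundary letters, but these two notions coincide because the sub-words of $B$ whose replacements could still contain $w$ must themselves be contiguous and are determined by their left and right endpoints. Beyond this small bookkeeping, both parts are immediate from the compositional behaviour of $R$ together with the definition of a parent, and I do not expect any further technical obstacle.
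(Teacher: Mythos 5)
Your proposal is correct and follows essentially the same route as the paper: the first assertion is proved by the identical contradiction argument (an earlier occurrence of a parent would push $w$ up to $L_{j+1}$ with $j+1<k$), and your explicit trimming construction for the second assertion is just a spelled-out version of the fact the paper states in contrapositive form, namely that every occurrence of $w$ on $L_k$ must arise from some parent on $L_{k-1}$. The extra care you take in checking that trimming yields a genuine minimal parent is sound but not a different method.
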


\begin{proof}
By contradiction for the first part of the statement, assume a parent $p$ of $w$ appears on $L_{j}$, where $j<k-1$. Then, since $w$ is a child of $p$, $w$ appears on $L_{j+1}$. However, since $j+1<k$, this contradicts our initial assumption that $w$ appears for the first time on $L_{k}$. Thus, the first time a parent of $w$ can appear is no earlier than $L_{k-1}$. For the second part of the statement, if none of the possible parents of $w$ appears on $L_{k-1}$, then the word $w$ itself does not appear on $L_{k}$.
\end{proof}

With this observation in hand, we can establish the latest level on which a $1$-letter word can appear for the first time, whatever the size of the alphabet and even regardless of the dimensionality of the grid and the size of the replacement rules. The following result serves as the base case for the latest first-time appearance of a word of any length.

\begin{lemma}
\label{lemma:oneletterword}
Given an $n$-letter alphabet and a $1$-letter word $w$, the latest level on which $w$ can appear for the first time is $L_{n}$:
\[F_{1}(b,n,w) \leq n.\]
\end{lemma}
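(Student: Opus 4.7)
The plan is to set up an induction-like chain argument on letters, powered by Lemma~\ref{lemma:firsttimelk}. Suppose $w$ is a single letter that first appears on level $L_k$ for some $k \geq 1$. I want to exhibit a chain of distinct single letters of length $k$ in the alphabet, which will immediately force $k \leq n$.

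First I would argue that in one dimension, any parent of a single-letter word is itself a single letter. This is because $w$ occupies exactly one position on $L_k$, so it lies entirely inside the replacement block of exactly one letter on $L_{k-1}$; that letter, call it $v_1$, is therefore a single-letter parent of $w$. Next I would iterate Lemma~\ref{lemma:firsttimelk}: by that lemma, some parent of $w$ first appears no earlier than $L_{k-1}$, and by the argument just given, at least one such parent $v_1$ is a single letter that first appears precisely on $L_{k-1}$. Repeating this step, I obtain a sequence of single letters $w = v_0, v_1, v_2, \ldots, v_{k-1}$ such that $v_i$ first appears on $L_{k-i}$, with $v_{k-1}$ already present on $L_1$.

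The key observation is then that these letters must be pairwise distinct: if $v_i = v_j$ for some $i < j$, then as letters of the alphabet they would share a single ``first appearance'' level, forcing $k - i = k - j$ and hence $i = j$. Since the chain consists of $k$ distinct letters drawn from an $n$-letter alphabet, we conclude $k \leq n$, which is exactly $F_1(b,n,w) \leq n$.

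I do not anticipate a real obstacle here; the only subtlety worth flagging carefully in the write-up is the step that a single-letter word cannot have a multi-letter parent, since this is the one place where one-dimensionality and the fact that $|w| = 1$ enter the argument. Everything else is bookkeeping on top of Lemma~\ref{lemma:firsttimelk}. Note also that the bound is independent of the replacement length $b$, which matches the statement.
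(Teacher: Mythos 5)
Your proposal is correct and follows essentially the same route as the paper's proof: observe that in one dimension a single letter's parent is again a single letter, trace the chain of parents back using Lemma~\ref{lemma:firsttimelk}, and note that distinct first-appearance levels force the letters in the chain to be pairwise distinct, giving $k \leq n$. Your write-up merely makes the paper's uniqueness step more explicit.
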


\begin{proof}
The parent of a single letter is always itself a single letter. When tracing back the single-letter parents of parents of $w$, we see, using Lemma~\ref{lemma:firsttimelk}, that all of them must be unique. Hence, we can only go back $n-1$ levels before we run out of letters that appear for the first time on any given level. Thus, $w$ appears for the first time on $L_{n}$ at the latest, else it does not appear at all.
\end{proof}

\begin{example}
In Example~\ref{example:alphabetonedim}, the size of the alphabet is $3$. Letter $A$ appears on $L_1$ for the first time, $B$ on $L_2$, and $C$ on $L_3$. No new letter appears from $L_4$ onwards since there are no other letters left in the alphabet. Thus, all new $1$-letter words appear for the first time within the bound $3$ given by Lemma~\ref{lemma:oneletterword}.
\end{example}

The case of $1$-letter words above is, in fact, special, for $1$-letter words are the only words that can never, on any level, appear on the overlap of two or more replacement rules of letters from the previous level. Thus, $2$-letter words represent another important base case, namely that of words that can span multiple blocks given by the replacement rules. Thus, understanding $2$-letter words in the one-dimensional scenario does most of the job of understanding words of any length.

\begin{lemma}
\label{lemma:twoletterwordonedim}
Given an $n$-letter alphabet and a $2$-letter word $w$, the latest level on which $w$ can appear for the first time is no greater than $L_{n^2+1}$:
\[F_{1}(b,n,w) \leq n^2+1.\]
\end{lemma}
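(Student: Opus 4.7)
The plan is to iterate Lemma~\ref{lemma:firsttimelk} to follow a chain of parents of $w$ down from $L_k$ toward $L_1$, and then to bound the chain's length by combining Lemma~\ref{lemma:oneletterword} with a pigeonhole count on the $n^2$ possible $2$-letter words.

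Concretely, assuming $w$ first appears on $L_k$, I would apply Lemma~\ref{lemma:firsttimelk} repeatedly to obtain a sequence $v_0 = w, v_1, v_2, \ldots$ in which $v_{i+1}$ is a parent of $v_i$ and $v_i$ first appears on $L_{k-i}$. The replacement of a single letter is a string of $b$ letters, so the parent of a $2$-letter word is either a single letter (when the word sits inside one replacement block) or another $2$-letter word (when the word crosses a block boundary). Since the parent of any $1$-letter word is itself a $1$-letter word, once some $v_i$ is a single letter, every $v_j$ with $j > i$ is also a single letter; the chain therefore splits into an initial $2$-letter segment $v_0, \ldots, v_{j-1}$ followed by a (possibly empty) $1$-letter tail.

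For the counting, I would use that the $2$-letter words in the initial segment first appear on pairwise distinct levels $L_k, L_{k-1}, \ldots, L_{k-j+1}$ and are hence pairwise distinct as words, giving $j \le n^2$ by pigeonhole. If the chain ever transitions to a $1$-letter ancestor, its leading element $v_j$ is a $1$-letter word whose first appearance $L_{k-j}$ is constrained by Lemma~\ref{lemma:oneletterword} to satisfy $k - j \le n$. To push the bound down to the advertised $k \le n^2 + 1$, I would sharpen the transition step: $v_{j-1}$ is a $2$-letter substring of the replacement of its $1$-letter parent $v_j$, so $v_{j-1}$ must appear at the level immediately following $v_j$'s first appearance, which means the $1$-letter tail contributes at most a single extra level beyond the $2$-letter chain rather than the full $n$ that Lemma~\ref{lemma:oneletterword} allows in isolation.

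The hard part will be precisely this last sharpening. A naive combination of the two pigeonholes only yields the looser bound $k \le n^2 + n$; closing the gap to $n^2 + 1$ requires arguing that a long $1$-letter ancestor tail would already force many new $2$-letter substrings into the corresponding replacement blocks along the way, each consuming a slot in the $n^2$ budget used to bound $j$. Formalizing this joint pigeonhole on letters and $2$-letter words — the observation that the $2$-letter run and the $1$-letter tail cannot both be maximally long simultaneously — is the technical heart of the argument, whereas the weaker $n^2 + n$ bound would follow immediately from the chain decomposition above.
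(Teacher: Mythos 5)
Your chain decomposition is exactly the paper's: follow parents via Lemma~\ref{lemma:firsttimelk}, note that the parent of a $2$-letter word is either another $2$-letter word or a single letter, split the chain into a run of pairwise-distinct $2$-letter words followed by a tail of pairwise-distinct single letters, and pigeonhole each piece. But the step that actually delivers $n^2+1$ rather than $n^2+n$ is not carried out. Your first attempt at it --- that $v_{j-1}$ appears on the level immediately after $v_j$'s first appearance, so the single-letter tail ``contributes at most a single extra level'' --- is a non sequitur: the tail is $v_j, v_{j+1}, \dots$ all the way down to $L_1$, and the adjacency of $v_{j-1}$ and $v_j$ says nothing about the length of that descent, which Lemma~\ref{lemma:oneletterword} alone only caps at $n$. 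You then correctly identify that a joint pigeonhole on letters and pairs is needed, but you state it as a goal rather than prove it, and the mechanism you gesture at (new letters in the tail ``forcing new $2$-letter substrings'' that consume the $n^2$ budget) is not obviously workable: those substrings are not members of your ancestor chain, so the distinctness you get from Lemma~\ref{lemma:firsttimelk} does not apply to them.

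The paper closes the gap with a cleaner trade-off. If the chain drops to a single letter after its $2$-letter segment has used only $m$ distinct letters of the alphabet, then that segment occupies at most $m^2$ levels (only $m^2$ ordered pairs are available), while the single-letter tail has at most $n-m$ further levels before running out of unseen letters and hitting $L_1$; since $n-m < (n+m)(n-m) = n^2 - m^2$ for $0 < m < n$, we get $m^2 + (n-m) < n^2$, so the mixed case is strictly cheaper than the pure $2$-letter case and the total never exceeds $n^2+1$. In other words, the pair budget and the letter budget are charged against the same pool of $n$ symbols and cannot both be saturated --- this single inequality is the entire content of the sharpening you left open, and without it (or an equivalent) your argument establishes only $F_{1}(b,n,w) \leq n^2 + n$.
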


\begin{proof}
Suppose $w$ appears for the first time on $L_{k}$. Then either $w$ is part of a replacement rule of some $1$-letter word $p$ that appears for the first time on $L_{k-1}$, and we are solving on $L_{k-1}$ the problem of Lemma~\ref{lemma:oneletterword}, or $w$ has a parent $p$ with two letters, $|p|=2$. Then we are recursively solving finding a two-letter word for the first time on $L_{k-1}$. Tracing back the parents of parents, assuming none of them is a single letter, we can see that all of them must be unique $2$-letter words by Lemma~\ref{lemma:firsttimelk}. Hence, we can only go back $n^2-1$ levels before we run out of all the possible ordered pairs of $n$ letters, so one additional step back must bring us to $L_1$. This allows the $2$-letter word $w$ to appear on $L_{n^2+1}$ at the latest as claimed. Note that if we snap back to the problem of $1$-letter words before exhausting all $n^2$ possible pairs, assuming that we have so far seen $m$ letters, we have descended by at most $m^2$ levels and have no more than $n-m$ levels to go before we run out unseen letters and reach $L_1$. Now since $n>m>0$, we have $1<n+m$ and $0<n-m$, so from $n-m<(n+m)(n-m)$, we get $n-m<n^2-m^2$. Thus, $m^2+n-m<n^2$ implies that a $2$-letter word can really appear on $L_{n^2+1}$ at the latest as claimed.
\end{proof}

\begin{example}
In Example~\ref{example:alphabetonedim}, we point out that the word $CC$ never appears. Although one can reach this conclusion intuitively by inspecting the replacement rules, Lemma~\ref{lemma:twoletterwordonedim} tells us that even with the brute-force approach of writing out all the levels, it is enough to check $3^2+1=10$ levels since $L_{10}$ is the latest level on which any $2$-letter word can appear for the first time. Given that $L_{10}$ in this setup has a mere $512$ letters or approximately $7$ lines, it would not be that difficult to verify!
\end{example}

The key to figuring out an upper bound for the latest level on which a word of length greater than $2$ can appear for the first time is determining in how many levels this word can be reduced to a $2$-letter word. Intuitively, while transitioning between levels, the length of the parent on $L_{k-1}$ of the child on $L_k$ will be smaller by a quotient of $b$, so if the parent of $w$ is $p$, we have $|p| \approx \frac{|w|}{b}$. However, as the following example shows, this is really only an approximation.

\begin{example}
In Example~\ref{example:alphabetonedim}, the word $AB$ on $L_3$ has the children $ABA$, $BAC$, and $ABAC$ on $L_4$. The parent of $ABBB$ on $L_4$ is the word $AC$ on $L_3$, while the parent of the overlapping word $CABB$ on $L_4$ is the word $BAC$ on $L_3$.
\end{example}

Therefore, it is useful to establish an upper bound on the length of the parent of any word $w$. The intuition behind focusing on the upper bound is that the longer the parents are along the way of going down levels, the more levels it takes before our initial word reduces to a $2$-letter word, which in turn means that $w$ might be hidden deeper.

\begin{lemma}
\label{lemma:maxparentonedim}
Given an $n$-letter alphabet with $b$-letter replacement rules and a word $w$ on $L_{k}$, an upper bound on the length $|p|$ of a parent $p$ of this word on $L_{k-1}$ is given by
$$|p| \leq \left \lceil \frac{|w| + b-1}{b} \right \rceil.$$
\end{lemma}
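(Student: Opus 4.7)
The plan is to turn the statement into a direct counting problem about how many consecutive length-$b$ blocks can collectively cover the contiguous range occupied by $w$, and then finish with a short ceiling-function manipulation. First I would observe that because $w$ occupies a contiguous range of positions on $L_k$ and each replacement rule is a contiguous block of $b$ positions on $L_k$, any parent $p$ must itself be a contiguous substring of $L_{k-1}$, and $R(p)$ decomposes as $|p|$ consecutive blocks of length $b$ with $w$ sitting as a contiguous substring inside $R(p)$.

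The heart of the argument is then the minimality clause in the definition of parent. Dropping either end letter of $p$ is forbidden, so the leftmost block of $R(p)$ must overlap $w$ at (at least) its rightmost position and the rightmost block must overlap $w$ at (at least) its leftmost position; otherwise I could shorten $p$ from that side and contradict the assumption that no proper sub-word of $p$ replaces to contain $w$. Counting positions, the two end blocks contribute at least one letter of $w$ apiece, and the $|p|-2$ interior blocks, if any, lie entirely inside $w$ and contribute all $b$ of their letters. This yields the key inequality
\[
|w| \;\geq\; (|p|-2)b + 2
\]
whenever $|p|\geq 2$; the cases $|p|\in\{0,1\}$ are immediate and satisfy the claimed bound trivially.

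Rearranging gives $|p| \leq (|w| + 2b - 2)/b$, and since $|p|$ is an integer I can replace the right-hand side by its floor, then invoke the standard identity $\lfloor (y + b - 1)/b \rfloor = \lceil y/b \rceil$ with $y = |w| + b - 1$ to rewrite the bound in the claimed form $\lceil (|w| + b - 1)/b \rceil$. The step I expect to take the most care is the minimality reduction: I need to argue cleanly that more than $b-1$ positions of an end block cannot lie outside $w$, since such a block would contribute no letter to $w$ and could be deleted from $p$ without destroying the containment, contradicting minimality. Once that observation is pinned down, the remainder of the argument is purely arithmetic.
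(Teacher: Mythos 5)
Your proposal is correct and follows essentially the same route as the paper: both arguments rest on the observation that the two end blocks of $R(p)$ each contribute at least one letter of $w$ while the $|p|-2$ interior blocks lie entirely inside $w$, giving $|w| \geq (|p|-2)b+2$, followed by routine ceiling arithmetic. Your explicit appeal to the minimality clause in the definition of parent and your use of the floor--ceiling identity are just slightly more spelled-out versions of the paper's ``at least partially cover each $b$-letter section'' step and its strict-inequality manipulation.
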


\begin{proof}
Fix a length $\ell$ and consider all the possible lengths $|w|$ of a word $w$ for which the maximum length of the parent is $\ell$. The parent $p$ of length $\ell$ on $L_{k-1}$ is replaced by an $\ell b$-letter string. For $w$ on $L_k$ to at least partially cover each $b$-letter section of this $\ell b$-letter string, so that its parent can have length $\ell$, we have
\[(\ell - 2)b+1 < |w|,\]
from which we get
\[(\ell - 1)b < |w| +b -1,\]
and by dividing on both sides, we have
\[\ell - 1 < \frac{|w| +b -1}{b},\]
from which the lemma follows.
\end{proof}

Using this and previous results, we can finally establish an upper bound on the function $F_{1}(b,n,w)$ defined in Section~\ref{section:preliminaries}, which gives the actual latest level on which a specific word $w$ can appear for the first time given an $n$-letter alphabet and $b$-letter replacement rules across all possible configurations of $L_1$. First, we define the function $\mathcal{W}_1$, which is a candidate for the upper bound on $F_{1}$. Note below the subtle difference between the parameters of the functions $F_{1}(b,n,w)$ and $\mathcal{W}_1(b, n, |w|)$. While $F_{1}$ provides the latest level on which a \textit{specific} word $w$ can appear for the first time, i.e., the output may be different for two words of the same length, $\mathcal{W}_1$ provides the same estimate for all the words of equal length regardless of the letters the words are composed of.

\begin{definition}
\label{definition:onedimwordfunction}
Let $\mathcal{W}_1 : \mathbb{N}^3 \mapsto \mathbb{N}$ be a map such that
$$\mathcal{W}_1(b, n, |w|)=
\begin{cases}
  n  & \quad |w| = 1, \\
  \lceil \log_{b}(b|w|-b) \rceil + n^2 & \quad |w| > 1.
\end{cases}
$$
\end{definition}

Next, we prove that $\mathcal{W}_1$ actually is an upper bound of $F_{1}$. We iteratively reduce the word to its largest possible parent, and once we arrive at a $2$-letter word, we already know what to do by Lemma~\ref{lemma:twoletterwordonedim}.

\begin{theorem}
\label{thm:onedim}
Given a one-dimensional grid, an $n$-letter alphabet with $b$-letter replacement rules, and a word $w$, an upper bound on $F_{1}(b,n,w)$ is given by $\mathcal{W}_1(b, n, |w|)$:
\[F_{1}(b,n,w) \leq \mathcal{W}_1(b, n, |w|).\]
\end{theorem}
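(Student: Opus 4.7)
The plan is to proceed by strong induction on $|w|$, using Lemma~\ref{lemma:oneletterword} and Lemma~\ref{lemma:twoletterwordonedim} as base cases for $|w|=1$ and $|w|=2$. For the latter, note that $\mathcal{W}_1(b,n,2) = \lceil \log_b b \rceil + n^2 = n^2+1$, matching Lemma~\ref{lemma:twoletterwordonedim} on the nose. These are precisely the two lengths for which the parent-length bound of Lemma~\ref{lemma:maxparentonedim} does not force a strict decrease, so they are exactly the right termination for the descent.

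For $|w| \geq 3$, suppose $w$ first appears on $L_k$ with $k \geq 2$. By Lemma~\ref{lemma:firsttimelk}, some parent $p$ of $w$ first appears on $L_{k-1}$, and by Lemma~\ref{lemma:maxparentonedim} we have $|p| \leq \lceil (|w|+b-1)/b \rceil$. A short check confirms $|p| < |w|$ whenever $|w| \geq 3$ (assuming $b \geq 2$), so the induction hypothesis applies and yields $k-1 \leq \mathcal{W}_1(b,n,|p|)$. It then remains to show $\mathcal{W}_1(b,n,|p|) + 1 \leq \mathcal{W}_1(b,n,|w|)$. If $|p|=1$, this reduces to $n+1 \leq \lceil \log_b(b|w|-b)\rceil + n^2$, which follows at once from $\lceil \log_b(b|w|-b)\rceil \geq 1$ and $n \leq n^2$.

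The substantive case is $|p| \geq 2$. Using the identity $\log_b(bx-b) = 1 + \log_b(x-1)$, the desired inequality collapses to $\lceil \log_b(|p|-1)\rceil \leq \lceil \log_b(|w|-1)\rceil - 1$. This is the main technical hurdle: the ceiling in the parent-length bound introduces slack, and I must verify that this slack does not erode the log-gap of one. My plan is to set $m = \lceil \log_b(|w|-1)\rceil$, so that $|w|-1 \leq b^m$ and consequently $(|w|+b-1)/b \leq b^{m-1}+1$; since $b^{m-1}+1$ is already an integer, the ceiling on the left cannot exceed it, giving $|p| \leq b^{m-1}+1$ and hence $\lceil \log_b(|p|-1)\rceil \leq m-1$, as required. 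I expect this ceiling-compatibility step to be the only delicate piece of the argument; everything else amounts to a straightforward chaining of the preceding lemmas.
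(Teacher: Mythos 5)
Your proof is correct and follows essentially the same route as the paper's: strong induction on $|w|$ with Lemmas~\ref{lemma:oneletterword} and~\ref{lemma:twoletterwordonedim} as base cases and Lemma~\ref{lemma:maxparentonedim} driving the descent. The only difference is presentational --- the paper brackets word lengths into ranges $b^c+1 < |w| \leq b^{c+1}+1$ to dispose of the ceiling--logarithm bookkeeping, whereas you verify the inequality $\lceil \log_b(|p|-1)\rceil \leq \lceil \log_b(|w|-1)\rceil - 1$ directly; both computations check out.
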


\begin{proof}
By Lemma~\ref{lemma:oneletterword}, we have $F_{1}(b,n,w) \leq n = \mathcal{W}_1(b, n, 1)$ for $|w|=1$. By Lemma~\ref{lemma:twoletterwordonedim}, we have $F_{1}(b,n,w) \leq n^2+1 = \mathcal{W}_1(b, n, 2)$ for $|w|=2$.

Now we use strong mathematical induction on the length of the word $w$. Assume that there exists a $c \in \mathbb{N}_0$ such that for all words $v$ such that $|v| \leq b^{c} + 1$, the statement holds, and suppose that we have a word $w$ such that
\[b^{c} + 1 < |w| \leq b^{c+1} + 1.\]

Consider the maximum size of the parent $p$ of $w$. By Lemma~\ref{lemma:maxparentonedim}, this size does not exceed 
\[\left \lceil \frac{|w|+b-1}{b} \right \rceil \leq  \left \lceil \frac{b^{c+1}+1 +b-1}{b} \right \rceil = b^c +1.\]

Thus, by the induction assumption, we know that the parent $p$ of $w$ can appear for the first time no later than $\mathcal{W}_1(b, n, b^c+1) = \lceil \log_{b}(b(b^c+1))-b) \rceil + n^2 = c+1+n^2$. Thus, the word $w$ itself appears for the first time no later than $c+2+n^2 = \mathcal{W}_1(b, n, b^{c+1}+1)$ as claimed and $F_{1}(b,n,w) \leq \mathcal{W}_1(b, n, b^{c+1}+1)=\mathcal{W}_1(b, n, |w|)$ for all $|w| > 2$.
\end{proof}

\begin{example}
\label{example:CACABA}
Considering the alphabet and the replacement rules from Example~\ref{example:alphabetonedim}, suppose we know that the word $CACABA$ appears for the first time on $L_k$. We want to find the largest $k$ for which this is possible.

Looking at the replacement rules $AB$, $AC$, and $BB$ for the letters $A$, $B$, and $C$ respectively, we see that if the word $CACABA$ appears later than $L_1$, its parent must be either $BBAA$ or $BBAB$ on $L_{k-1}$. If the parent is $BBAA$, then $L_{k-1}$ is $L_1$ since we cannot have two consecutive letters $A$ on any level but the first. If, however, the parent is $BBAB$, it could have the parent $CA$ on $L_{k-2}$. We continue like this, considering all the possible parents of parents, and stop whenever one of two conditions is met. First, if we reach a word that cannot be a child of any other word, then this word can only be found on $L_1$. Second, if we reach a word on a hypothetical $L_{k-x}$ that has already been seen on a level between $L_{k-x}$ and $L_{k}$, we can ignore it. This approach can be visualized using a graph like in Figure~\ref{fig:CACABA}, where the leaf nodes represent the stopping words in the search.

Hence, we conclude $L_{k-5}$ is the deepest $L_1$ we can reach. If we start with level $L_1$ being $A$, the next few levels are $AB$, $ABAC$, $ABACABBB$, $ABACABBBABACACAC$, followed by $L_6$,
\[ABACABBBABACACACABACABBBABBBABBB,\]
with $CACABA$ on it. Hence, the word $CACABA$ appears on $L_6$ at the latest. This satisfies the constraint given by Theorem~\ref{thm:onedim}, which suggests the upper bound $\lceil \log_{2}(10) \rceil + 9=4+9=13$.
\end{example}

\begin{figure}[ht!]
\begin{center}
\scalebox{1}{
\begin{forest}
for tree={l=1.5cm,s sep=0.5cm,grow'=north}
[$CACABA$
  [$BBAA$]
  [$BBAB$
    [$CA$
      [$BB$
        [$C$
          [$B$]
        ]
        [$AC$
          [$B$]
        ]
        [$CC$]
      ]
      [$BA$
        [$AA$]
        [$AB$
          [$A$]
        ]
      ]
    ]
  ]
]
\end{forest}
}
\end{center}
\caption{Possible lineage for the word $CACABA$}
\label{fig:CACABA}
\end{figure}
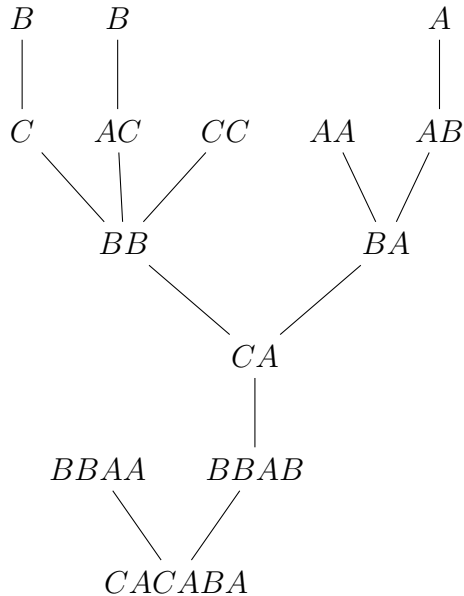

In the example above, the latest level on which the word $CACABA$ can appear for the first time is much lower than the upper bound suggested by Theorem~\ref{thm:onedim}. This is a more general observation.

\begin{example}
Using computer-aided verification, one can show that given $2$-letter replacement rules for a $2$-, $3$-, and $4$-letter alphabet, the actual latest levels on which any word can appear for the first time are $4 < 2^2 + 1$, $7 < 3^2 + 1$, and $13 < 4^2 + 1$ respectively\footnote{\href{https://colab.research.google.com/drive/1gmUPLS048wLmHG2SFhGt-l_qMVPvBepr?usp=sharing}{Google Colab Notebook}}.
\end{example}

To conclude, in this section, we have proven an upper bound on the latest level on which a word of any length can appear for the first time in one dimension, with an $n$-letter alphabet and $b$-letter replacement rules, and we can now apply all our findings to the two-dimensional scenario.

\section{Two Dimensions}
\label{section:twodimensions}

In this section, we build on the results from the previous section and find an upper bound on $F(w)$ for vertical, horizontal, and diagonal words in a two-dimensional fractal puzzle. Thus, the implication of the following subsections is that the search in Kisman's puzzle is finite. We start off by defining a useful tool which simplifies the study of diagonal words and also helps collapse the problem of vertical and horizontal words to the already solved problem in one dimension.

\subsection{Bounding Box}

In this subsection, we define the concept of a \textit{bounding box}. The motivation behind this comes from several observations about diagonal words and their parents. We first notice that if a word $w$ is diagonal, its parent $p$ might not be a diagonal word. In Figure~\ref{fig:3letterdiagonal}, the parent of the left word can be represented as three letters in a shape of $L$, $\begin{smallmatrix}*&\\ *&*\end{smallmatrix}$, while the parent of the word on the right are three letters in the shape of inverted $L$, $\begin{smallmatrix}*&*\\ &*\end{smallmatrix}$. Hence, we may need to discuss not only words but also sets of letters.

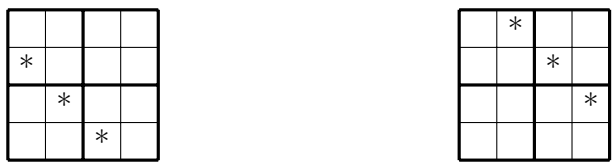
\begin{figure}[ht!]
\begin{center}
\begin{tikzpicture}[scale=.5]
  \begin{scope}
    \draw (0, 0) grid (4, 4);
    \draw[very thick, scale=2] (0, 0) grid (2, 2);

    \setcounter{row}{1}
    \setrow { }{ }  { }{ }
    \setrow {*}{ }  { }{ }
    \setrow { }{*}  { }{ }
    \setrow { }{ }  {*}{ }
  \end{scope}

  \begin{scope}[xshift=12cm]
    \draw (0, 0) grid (4, 4);
    \draw[very thick, scale=2] (0, 0) grid (2, 2);

    \setcounter{row}{1}
    \setrow { }{*}  { }{ }
    \setrow { }{ }  {*}{ }
    \setrow { }{ }  { }{*}
    \setrow { }{ }  { }{ }
  \end{scope}
\end{tikzpicture}
\end{center}
\caption{Possible $3$-letter diagonal words  with $3$-letter parents}
\label{fig:3letterdiagonal}
\end{figure}

Second, the parent of a diagonal word might have just as many letters as the word itself as demonstrated in the example below. Therefore, we require a tool that captures how exactly the parent is smaller.

\begin{example}
\label{example:boxmotivation}
Consider the $3$-letter alphabet $A$, $B$, $C$ with $\left(2 \times 2\right)$-letter replacement rules, where the letters $A$, $B$, and $C$ are replaced by the blocks $\begin{smallmatrix}A&B\\ C&B\end{smallmatrix}$, $\begin{smallmatrix}A&C\\ B&B\end{smallmatrix}$, and $\begin{smallmatrix}B&B\\ C&C\end{smallmatrix}$, respectively. Then the diagonal word $ABAB$ could have the diagonal parent $AB$, and the diagonal word $CBBC$ could have the following parent of width $2$ and height $3$,
\begin{center}
\stretchit{A*}\textcolor{white}{.}\par
\stretchit{CB}\textcolor{white}{.}\par
\stretchit{*B}.\par
\end{center}
\end{example}

As we will see, the bounding box accomplishes both the goal of tracking sets of letters and of capturing the size of a parent in a meaningful way.

\begin{definition}
\label{definition:box}
Let the \textit{bounding box} of a given set of letters in a grid be the smallest rectangle that contains the set.
\end{definition}

\begin{example}
The bounding box of the set of letters
\begin{center}
\stretchit{AB*}\par
\stretchit{**B}\par
\stretchit{*B*}\par
\end{center}
in the two-dimensional grid
\begin{center}
\stretchit{ABAC}\par
\stretchit{CBBB}\par
\stretchit{BBAC}\par
\stretchit{CCBB}\par
\end{center}
is the following $(3 \times 3)$-letter box
\begin{center}
\stretchit{ABA}\textcolor{white}{.}\par
\stretchit{CBB}\textcolor{white}{.}\par
\stretchit{BBA}.\par
\end{center}
\end{example}

Following the above definition, we observe that the width and height of the bounding box of a set of letters $w$ are independent of each other when tracing back the parents of $w$. Thus, we conclude that each dimension of the bounding box of a parent $p$ to a set of letters $w$ behaves according to Lemma~\ref{lemma:maxparentonedim}.

\begin{lemma}
\label{lemma:maxparentbox}
Given a two-dimensional grid, an $n$-letter alphabet with $(b \times b)$-letter replacement rules, and a set of letters $w$ with an $(y \times x)$-letter bounding box on $L_{k}$, each side of the box shrinks on $L_{k-1}$ according to the bound established for words in the one-dimensional scenario.
\end{lemma}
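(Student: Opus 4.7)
The plan is to exploit the fact that the $(b\times b)$-letter replacement rules factor cleanly into independent $b$-letter replacements along the horizontal and vertical axes. Concretely, a cell at position $(r,c)$ on $L_k$ lies inside the replacement block of the letter at position $(\lceil r/b\rceil,\lceil c/b\rceil)$ on $L_{k-1}$, and whether two cells of $L_k$ share a parent row depends only on their row indices, with an analogous statement for columns. This is exactly what makes the bounding-box dimensions uncoupled.

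Given this, I would argue as follows. Let the bounding box of $w$ on $L_k$ span rows $r_1,\dots,r_2$ and columns $c_1,\dots,c_2$, so that $y=r_2-r_1+1$ and $x=c_2-c_1+1$. The parent set $P$ of $w$ on $L_{k-1}$ can only consist of letters whose replacement blocks contain at least one letter of $w$; in particular $P$ is contained in the rectangle of rows $\lceil r_1/b\rceil,\dots,\lceil r_2/b\rceil$ and columns $\lceil c_1/b\rceil,\dots,\lceil c_2/b\rceil$. Hence the bounding box of $P$ has height at most $\lceil r_2/b\rceil-\lceil r_1/b\rceil+1$ and width at most $\lceil c_2/b\rceil-\lceil c_1/b\rceil+1$.

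Now I would invoke Lemma~\ref{lemma:maxparentonedim} twice, once horizontally and once vertically. For the horizontal direction, project $w$ onto a single row: the resulting set of column indices is contained in a length-$x$ interval, and the parent columns in $L_{k-1}$ form an interval of length at most $\lceil (x+b-1)/b\rceil$ by the exact counting argument used in the one-dimensional proof (there, the parent on $L_{k-1}$ of length $\ell$ is replaced by an $\ell b$-letter string, and to make all $\ell$ blocks necessary one needs $(\ell-2)b+1<x$). The identical argument applied to the row indices bounds the height of the bounding box of $P$ by $\lceil (y+b-1)/b\rceil$. Putting the two together gives the claim.

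The only subtlety I expect is justifying that the horizontal and vertical extents of the parent really can be handled in isolation; once the factorization of $R$ into independent row and column replacements is stated carefully, the rest is just Lemma~\ref{lemma:maxparentonedim} applied componentwise, with no extra bookkeeping required because the parent set need not form a contiguous word — only its bounding box is relevant, and bounding boxes trivially decouple into a row range and a column range.
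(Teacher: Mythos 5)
Your proposal is correct and follows essentially the same route as the paper: the paper's proof likewise treats the width and height independently and appeals to the counting argument of Lemma~\ref{lemma:maxparentonedim} for each side separately. You simply spell out in more detail (via the explicit row/column index bookkeeping and the inequality $\lceil r_2/b\rceil-\lceil r_1/b\rceil+1\leq\lceil (y+b-1)/b\rceil$) the decoupling that the paper states in one sentence.
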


\begin{proof}
Consider the width $x$ of the bounding box. The reasoning for the height $y$ is analogous. We observe that the maximum width of the parent $p$ of the bounding box is constrained by $x$ and the width $b$ of the replacement rules the same way the maximum length of a parent is constrained by the child's length and the length of the replacement rules in the one-dimensional scenario by the proof of Lemma~\ref{lemma:maxparentonedim}.
\end{proof}

\begin{example}
\label{example:maxparentbox}
In Example~\ref{example:boxmotivation}, both examples satisfy the bound $\left \lceil \frac{4+2-1}{2} \right \rceil = 3$ on the length of each of the parent's sides. Notice that for $CBBC$, the width and height of its parent are not equal.
\end{example}

Therefore, thanks to the bounding box, we do not need to worry about whether the parent of $w$ is a diagonal word or a zig-zag formation of letters. Instead, we are concerned about the width and height of the formation. Since each of the sides of the bounding box shrinks with the same speed as a one-dimensional grid, we are left to investigate what happens if the box reduces to a $(2 \times 2)$-letter formation. However, before we do so, we note how the concept of the bounding box collapses the problem of vertical and horizontal words to the already solved problem in one dimension.

\subsection{Vertical and Horizontal}

In this subsection, we directly apply the notion of a bounding box to horizontal and vertical words in the two-dimensional scenario. The following result should not be surprising since intuitively, rows and columns of the two-dimensional grid themselves behave like one-dimensional grids.

\begin{theorem}
\label{thm:twodimvert}
Given a two-dimensional grid and an $n$-letter alphabet with $\left(b \times b\right)$-letter replacement rules, a vertical or horizontal word $w$ is isomorphic in its shrinking to the same word in the one-dimensional scenario, that is
\[F_{2}^{hv}(b,n,w) \leq \mathcal{W}_1(b, n, |w|).\]
\end{theorem}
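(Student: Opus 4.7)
The plan is to reduce the two-dimensional horizontal/vertical problem directly to the one-dimensional scenario already handled by Theorem~\ref{thm:onedim}, using the bounding box machinery just introduced. The key observation is that the bounding box of a horizontal word $w$ is a $1 \times |w|$ rectangle (and analogously $|w| \times 1$ for a vertical word), so Lemma~\ref{lemma:maxparentbox} controls how this box shrinks from level to level.

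First I would argue that if $w$ is a horizontal word on $L_k$, then its parent $p$ on $L_{k-1}$ is itself a horizontal word. The reason is that by Lemma~\ref{lemma:maxparentbox}, the height of the parent's bounding box is bounded by $\lceil (1+b-1)/b \rceil = 1$, forcing the parent to sit in a single row of $L_{k-1}$; the width of the parent's bounding box shrinks as $\lceil (|w|+b-1)/b \rceil$, which matches the one-dimensional bound in Lemma~\ref{lemma:maxparentonedim}. So the entire ancestor chain of a horizontal word consists of horizontal words whose lengths follow exactly the same recurrence as in the one-dimensional case.

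With this reduction in hand, I would mirror the strategy of Theorem~\ref{thm:onedim}: handle the base cases $|w|=1$ and $|w|=2$ via Lemmas~\ref{lemma:oneletterword} and~\ref{lemma:twoletterwordonedim}, then carry out the same strong induction on $|w|$, at each step passing from $w$ to its horizontal parent of length at most $\lceil (|w|+b-1)/b \rceil$. After at most $\lceil \log_b(b|w|-b) \rceil$ shrinking steps, the word reduces to a horizontal $2$-letter word, at which point the base case contributes the additional $n^2$ levels, yielding the stated bound $\mathcal{W}_1(b,n,|w|)$. The vertical case follows by symmetry.

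The main obstacle, though relatively mild, is being careful with the base cases in the two-dimensional setting. For $|w|=1$, the bounding box of a single letter is $1 \times 1$, its parent is a single letter, and the uniqueness argument of Lemma~\ref{lemma:oneletterword} transfers verbatim. For $|w|=2$, even though each alphabet letter now expands into $b$ distinct row-strings of length $b$ (rather than a single $b$-letter string), there are still only $n^2$ possible horizontal $2$-letter ancestors to cycle through, so the counting argument of Lemma~\ref{lemma:twoletterwordonedim} continues to deliver the $n^2+1$ bound. Once these base cases are verified, the inductive step from Theorem~\ref{thm:onedim} goes through without modification.
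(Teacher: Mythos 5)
Your proposal is correct and follows essentially the same route as the paper: reduce via the bounding box (noting the height stays at $1$ so the whole ancestor chain is horizontal), let the width shrink exactly as in Lemma~\ref{lemma:maxparentonedim}, and then extend the $n^2$ counting of Lemma~\ref{lemma:twoletterwordonedim} to horizontal $2$-letter words, whose parents can only be $1$-letter or horizontal $2$-letter blocks. Your explicit remark that each letter expands into $b$ row-strings yet there are still only $n^2$ distinct horizontal $2$-letter ancestors is a slightly more careful justification of the step the paper states as ``Lemma~\ref{lemma:twoletterwordonedim} can be extended to this situation,'' but the argument is the same.
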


\begin{proof}
Without loss of generality, consider a horizontal word $w$ since the proof for a vertical word is analogous. The bounding box of $w$ is the word itself, i.e.~it has $1 \times |w|$ letters. We notice that the vertical side of the box is already as small as possible and cannot get larger. By Lemma~\ref{lemma:maxparentbox}, the horizontal side shrinks according to the bound given by Lemma~\ref{lemma:maxparentonedim} until it reaches a length of $2$. Hence, we have reduced the problem to that of a horizontal $2$-letter word. Since a horizontal $2$-letter word can span at most two horizontally neighboring replacement rules and can therefore only have a $1$-letter or horizontal $2$-letter parent, Lemma~\ref{lemma:twoletterwordonedim} can be extended to this situation, and we reach $L_1$ after no more than $n^2$ levels. Hence, the reasoning of Theorem~\ref{thm:onedim} applies, and $\mathcal{W}_1(b, n, |w|)$ is an upper bound on $F_{2}^{hv}(b,n,w)$.
\end{proof}

\begin{example}
Consider the same alphabet and replacement rules as in Example~\ref{example:maxparentbox}. Like in Example~\ref{example:alphabetonedim}, we notice that if $L_1$ does not contain the horizontal or vertical word $AA$, this word never appears horizontally or vertically. Again, one could reach this conclusion intuitively, but Theorem~\ref{thm:twodimvert} tells us that even with the brute-force approach of writing out all the levels, it would be enough to check $3^2+1=10$ levels since $L_{10}$ is the latest level on which any horizontal or vertical $2$-letter word can appear for the first time.
\end{example}

\begin{remark}
Since in a two-dimensional grid, the number of rows/columns grows exponentially with each level, we expect it to be generally faster to find any given word on a horizontal/vertical than to find the same word in a growing one-dimensional grid, i.e., we expect to find the word on an earlier level in two dimensions. However, note that the function $\mathcal{W}_1(b, n, |w|)$ introduced in Definition~\ref{definition:onedimwordfunction} is just as tight an upper bound on $F_{2}^{hv}(b,n,w)$ as it is on $F_{1}(b,n,w)$ since our replacement rules could each have all its rows identical to each other, in which case all rows on any level are identical, and our two-dimensional case is isomorphic to the one-dimensional case.
\end{remark}

Having established an upper bound on $F_{2}^{hv}(b,n,w)$ or the latest level on which a horizontal or vertical word $w$ can appear for the first time, we can now apply the concept of a bounding box to the scenario by which it was motivated, i.e. diagonal words.

\subsection{Diagonal}

Thus, we have shown that horizontal and vertical words in two dimensions differ very little from words in a one-dimensional grid. Once we focus on diagonal words of length greater than $1$, however, the dynamic changes. First of all, the following lemma demonstrates that a diagonal $2$-letter word may be found for the first time at a later level than a horizontal or vertical $2$-letter word. This is due to the fact that a diagonal $2$-letter word can have horizontal, vertical, or diagonal $2$-letter parents.

\begin{lemma}
\label{lemma:twoletterwordtwodimdiag}
Given a two-dimensional grid, an $n$-letter alphabet, and a diagonal $2$-letter word $w$, the latest level on which $w$ can appear for the first time is $L_{2n^2+1}$.
\end{lemma}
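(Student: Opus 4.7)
The plan is to mirror the argument of Lemma~\ref{lemma:twoletterwordonedim}, extending its chain construction to account for the richer variety of parents that a diagonal $2$-letter word can have in two dimensions.

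First, I would classify the possible parents of a diagonal $2$-letter word at positions $(r,c)$ and $(r+1,c+1)$ in $L_k$. Depending on whether these two cells sit inside the same $(b \times b)$ block of $L_{k-1}$'s replacement lattice, the parent falls into exactly one of four types: a single letter (both cells in the same parent block), a horizontal $2$-letter word (horizontally adjacent blocks, same block-row), a vertical $2$-letter word (vertically adjacent blocks, same block-column), or another diagonal $2$-letter word (diagonally adjacent blocks). Meanwhile, as already used in Theorem~\ref{thm:twodimvert}, a horizontal $2$-letter word can only have a horizontal or single-letter parent, and symmetrically for vertical. So diagonal is the only ``branching'' type in the parent lineage.

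Next, I would invoke Lemma~\ref{lemma:firsttimelk} iteratively to build a chain of distinct first-appearance parents $w = p_0, p_1, \ldots, p_{k-1}$, where $p_i$ first appears on $L_{k-i}$ and $p_{k-1}$ sits on $L_1$. From the classification above, the chain decomposes (reading from $L_k$ down toward $L_1$) into a possibly empty prefix of diagonal $2$-letter words, then a possibly empty segment of purely horizontal or purely vertical $2$-letter words (since once the chain leaves the diagonal regime it commits to a single axis), then a possibly empty tail of $1$-letter words. Letting $m$ denote the number of distinct letters appearing in the $2$-letter portion of the chain, the diagonal sub-segment contributes at most $m^2$ distinct words, the horizontal-or-vertical sub-segment contributes at most another $m^2$ (both drawing from the same pool of $m$ letters), and the $1$-letter tail contributes at most $n - m$ words by the same letter-accounting argument as in Lemma~\ref{lemma:twoletterwordonedim}. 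Summing gives chain length at most $2m^2 + n - m$, maximized on $[1,n]$ at $m = n$ to give $2n^2$; adding one to account for the initial level $L_k$ itself (exactly as in the one-dimensional proof) yields $k \leq 2n^2 + 1$.

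I expect the main obstacle to be the letter-accounting step. One must verify that the diagonal and the horizontal/vertical sub-segments genuinely share the same size-$m$ pool of letters (so the two $2$-letter counts combine as $m^2 + m^2$ rather than being bounded independently by $n^2 + n^2$), and that the ``only $n - m$ new letters available'' bound on the $1$-letter tail survives the splitting of the $2$-letter portion into two stylistically different sub-segments. Once this bookkeeping is set out carefully, the optimization of $2m^2 + n - m$ and the extraction of the final $2n^2 + 1$ are routine.
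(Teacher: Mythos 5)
Your proposal is correct and follows essentially the same route as the paper's proof: classify the possible parents of a diagonal $2$-letter word (single letter, horizontal, vertical, or diagonal), observe that the lineage passes through a diagonal segment of at most $n^2$ distinct words before committing to a horizontal/vertical segment of at most $n^2$ more, and sum to get $2n^2+1$. Your $m$-letter bookkeeping refines the paper's cruder $n^2+n^2$ count but lands on the same bound, so the approaches are not materially different.
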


\begin{proof}
Suppose $w$ appears for the first time on a diagonal on $L_{k}$. If $w$ is contained in a replacement rule of some $1$-letter word $p$ that appears for the first time on $L_{k-1}$, we are solving on $L_{k-1}$ the problem of Lemma~\ref{lemma:oneletterword}, else $w$ has a parent $p$ with two letters. Then two possible scenarios arise. Either $p$ is vertical or horizontal and $L_{k-1}$ is at most $L_{n^2+1}$ by Theorem~\ref{thm:twodimvert}, so $L_k$ is at most $L_{n^2+2}$. Or $p$ is diagonal, and we are recursively solving finding a diagonal $2$-letter word for the first time on $L_{k-1}$. Tracing back the parents of the parents, assuming none of them is a single letter, we can see all of them must be unique diagonal $2$-letter words. Hence, we can only go back $n^2-1$ levels before we run out of all the possible ordered pairs of $n$ letters on the diagonal, so one additional step back must bring us to $L_1$ or to a vertical or horizontal $2$-letter word. At most $n^2$ levels of diagonal $2$-letter words added to at most $n^2$ levels of vertical or horizontal words allows the diagonal $2$-letter word $w$ to appear on $L_{2n^2+1}$ at the latest as claimed.
\end{proof}

\begin{example}
\label{example:twoletterwordtwodimdiag}
Consider the alphabet and replacement rules from Example~\ref{example:maxparentbox} and the single letter $A$ on $L_1$. Then the first three levels are
\begin{center}
\stretchit{A},\par
\end{center}

\begin{center}
\stretchit{AB}\textcolor{white}{,}\par
\stretchit{CB},\par
\end{center}

\begin{center}
\stretchit{ABAC}\textcolor{white}{.}\par
\stretchit{CBBB}\textcolor{white}{.}\par
\stretchit{BBAC}\textcolor{white}{.}\par
\stretchit{CCBB}.\par
\end{center}
The diagonal word $BB$ appears for the first time on $L_3$. This satisfies the constraint given by Lemma~\ref{lemma:twoletterwordtwodimdiag}, which sets the upper bound on the latest level on which a diagonal $2$-letter word can appear for the first time as $18+1=19$. Notice also that the diagonal word $AA$ can never appear. Although this conclusion can be reached intuitively, one could go through all the $19$ levels to verify the same result manually.
\end{example}

\begin{example}
Consider the alphabet and replacement rules from Example~\ref{example:maxparentbox}. Then, if the word $AC$ does not appear on $L_1$, it cannot appear later on the vertical or the horizontal, but it can appear later on the diagonal.
\end{example}

\begin{remark}
The upper bound given by Lemma~\ref{lemma:twoletterwordtwodimdiag} is very lenient, for it completely disregards the fact that while tracing back all the diagonal $2$-letter words, we will have seen many of all the possible vertical and horizontal $2$-letter words.
\end{remark}

We already showed that a parent of a diagonal word might not be a word. But what shapes are possible? The following lemma explains.

\begin{lemma}
Any ancestor of a diagonal word is contained in two neighboring diagonals.
\end{lemma}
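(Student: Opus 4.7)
The plan is to prove the statement by induction on the number $j$ of levels separating the diagonal word $w$ on $L_k$ from its ancestor on $L_{k-j}$. The key observation driving everything is that any cell at position $(y,x)$ on $L_k$ has its parent at position $(\lfloor y/b \rfloor, \lfloor x/b \rfloor)$ on $L_{k-1}$, so the cells occupied by the parent of any set of letters are completely determined by the block coordinates of the cells of the set. Once this cell-level description is in hand, the entire argument is a modular-arithmetic check on diagonal indices.

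For the base case $j = 1$, suppose $w$ occupies the cells $(y_0 + t, x_0 + t)$ for $t = 0, 1, \ldots, |w| - 1$, all on the diagonal $y - x = d$. Writing $y_0 + t = q_t b + r_t$ and $x_0 + t = q'_t b + r'_t$ with residues $r_t, r'_t \in \{0, \ldots, b-1\}$, every cell of $w$ satisfies $r_t - r'_t \equiv d \pmod{b}$. Since $r_t - r'_t$ lies in $[-(b-1), b-1]$, at most two values are consistent with this congruence, and any two such values differ by exactly $b$. Correspondingly, the parent diagonal index $q_t - q'_t = (d - (r_t - r'_t))/b$ takes at most two consecutive integer values, so the parent cells of $w$ are contained in two neighboring diagonals.

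For the inductive step, if the ancestor on $L_{k-j}$ is contained in the neighboring diagonals $d'$ and $d' + 1$, applying the same residue analysis to each diagonal shows that the cells originating from $d'$ produce parent cells on at most two consecutive diagonals, and similarly for $d' + 1$. A case split on whether $d' \equiv b - 1 \pmod{b}$ confirms that both pairs lie inside $\{p, p + 1\}$, where $p = \lfloor d'/b \rfloor$: in the generic case $d'$ and $d' + 1$ share the same quotient, while in the boundary case the cells on diagonal $d' + 1$ satisfy $r_t = r'_t$ and collapse onto the single diagonal $p + 1$. The main obstacle is precisely this boundary case, since a priori the union of two two-diagonal sets could spread across three consecutive diagonals; showing that the residue constraint forces a collapse on one side is what keeps the induction tight and closes the argument.
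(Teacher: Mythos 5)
Your proof is correct, but it takes a genuinely different route from the paper's. The paper argues forward in time: it takes two letters separated by at least one full diagonal, observes that after applying a replacement rule their children are still separated by at least one diagonal, and concludes by contrapositive that no configuration spanning three or more diagonals can ever descend to a diagonal word. You instead work backward from the word, doing induction on the number of levels and computing the parent's diagonal indices explicitly via the block-coordinate map $(y,x)\mapsto(\lfloor y/b\rfloor,\lfloor x/b\rfloor)$ and the congruence $r_t-r'_t\equiv d\pmod b$. Your handling of the one delicate point --- that the union of the two two-diagonal images of diagonals $d'$ and $d'+1$ could a priori span three diagonals, but the boundary case $d'\equiv b-1\pmod b$ forces a collapse onto the single diagonal $p+1$ --- is exactly right and is the step that makes the induction close. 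What your version buys: it is fully explicit for general $b$ (the paper's sketch is phrased only for $(2\times2)$ rules), and it pins down precisely \emph{which} two neighboring diagonals the ancestor occupies at each level, namely $\{\lfloor d'/b\rfloor,\lfloor d'/b\rfloor+1\}$, which is potentially useful for the backward-search algorithm in Section~\ref{section:solution}. What the paper's version buys is brevity and a conceptual invariant (separation is preserved under replacement) that requires no coordinate bookkeeping.
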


\begin{proof}
Consider letters $A$ and $B$ that have at least one diagonal in between. After applying a $(2 \times 2)$-letter rule, consider letters $X$ and $Y$ that belong to the child blocks of $A$ and $B$. We can see that they have to be separated by at least one diagonal. Continuing further, we cannot get to a diagonal word, which completes the proof.
\end{proof}

\begin{corollary}
\label{corollary:threeletterparent}
If a word ancestor is contained in a $(2 \times 2)$-letter bounding box, it cannot have more than $3$ letters, and if it has exactly $3$ letters, it has to be one of the shapes $\begin{smallmatrix}*&\\ *&*\end{smallmatrix}$ or $\begin{smallmatrix}*& *\\ &*\end{smallmatrix}$.
\end{corollary}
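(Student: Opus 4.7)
The plan is to derive this corollary as an immediate combinatorial consequence of the preceding lemma, which already guarantees that any ancestor of a diagonal word lives inside two neighboring diagonals of the grid (indexed, say, by the value $j-i$ in the $\searrow$ direction). Given that the ancestor's bounding box is at most $(2 \times 2)$, I would first enumerate the diagonals that a $(2 \times 2)$-letter box meets: the four cells $(1,1),(1,2),(2,1),(2,2)$ touch exactly three consecutive $\searrow$-diagonals, namely the one containing $(2,1)$, the main diagonal containing $(1,1)$ and $(2,2)$, and the one containing $(1,2)$.

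Next I would handle the upper bound on the number of letters. If an ancestor used all four cells of the box, it would occupy all three of these diagonals, which violates the lemma's ``two neighboring diagonals'' constraint; hence the ancestor has at most three letters. To pin down the shape when the count is exactly three, I would observe that a three-letter ancestor must omit exactly one of the four cells, and then check the four cases. Omitting $(1,1)$ or $(2,2)$ leaves cells on all three diagonals and is therefore forbidden; omitting $(1,2)$ leaves $\begin{smallmatrix}*&\\ *&*\end{smallmatrix}$, which lives on the two neighboring diagonals indexed by $0$ and $-1$; omitting $(2,1)$ leaves $\begin{smallmatrix}*&*\\ &*\end{smallmatrix}$, which lives on the two neighboring diagonals indexed by $0$ and $1$. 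These are precisely the two shapes claimed.

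There is essentially no obstacle here beyond bookkeeping, since the preceding lemma has already done the structural work; the main care is just to confirm that a three-letter configuration in a $(2 \times 2)$ box really does force the bounding box to be exactly $(2 \times 2)$ (otherwise three letters cannot fit), so that the diagonal-counting argument applies. With that verified, the two listed shapes are the only survivors and the corollary follows.
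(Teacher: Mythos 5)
Your argument is correct and matches what the paper intends: the paper states this corollary without proof as an immediate consequence of the preceding lemma, and your diagonal-counting bookkeeping (a $(2\times 2)$ box meets three consecutive diagonals, so four cells or the three-cell configurations omitting a main-diagonal corner are excluded) is exactly the implicit justification. Nothing is missing.
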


Imagine now a situation like in Figure~\ref{fig:3letterdiagonal} where the parent of a diagonal $3$-letter word is confined in a $(2 \times 2)$-letter box while not being a word itself. As the following lemma demonstrates, such $3$-letter formations have a behavior of their own when we trace their parents.

\begin{lemma}
\label{lemma:twobytwobox}
Given a two-dimensional grid, an $n$-letter alphabet, and a $3$-letter set of letters $w$ inside a $(2 \times 2)$-letter box, the latest level on which $w$ can appear for the first time is $L_{n^3+n^2+1}$.
\end{lemma}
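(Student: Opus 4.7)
The plan is to trace the ancestor chain of $w$ back to $L_1$ and show that it has length at most $n^3 + n^2 + 1$. Assume $w$ first appears on $L_k$ and let $w = w_k, w_{k-1}, \ldots, w_1$ be a chain in which $w_m$ lies on $L_m$ and is a parent of $w_{m+1}$. By Lemma~\ref{lemma:firsttimelk} each $w_m$ first appears on $L_m$, so the $w_m$ are pairwise distinct. Since the bounding box of $w$ has both sides of length $2$, iterating Lemma~\ref{lemma:maxparentbox} and using $\lceil (2+b-1)/b \rceil = 2$ shows that every $w_m$ also fits inside a $(2 \times 2)$ bounding box. Corollary~\ref{corollary:threeletterparent} then forces each $w_m$ to have one, two, or three letters, with the three-letter option being an L-shape $\begin{smallmatrix}*&\\ *&*\end{smallmatrix}$ or $\begin{smallmatrix}*&*\\ &*\end{smallmatrix}$.

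The crucial structural step, which I expect to be the main obstacle, is to further restrict the possible parents of a three-letter L-shape. Suppose $w_{m+1}$ is the L-shape $\begin{smallmatrix}*&\\ *&*\end{smallmatrix}$ occupying rows $i, i+1$ and columns $c, c+1$ on $L_{m+1}$; the other orientation is handled symmetrically. Each of its three cells sits in a unique parent cell on $L_m$, determined by whether $\lfloor i/b \rfloor = \lfloor (i+1)/b \rfloor$ and whether $\lfloor c/b \rfloor = \lfloor (c+1)/b \rfloor$. A direct four-case split on these two block-boundary crossings shows: if neither coordinate crosses, the parent is a single letter; if exactly one crosses, the parent is a horizontal or vertical two-letter word; and if both cross, the parent is itself an L-shape with the \emph{same} orientation as $w_{m+1}$. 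The key conclusions are that a two-letter parent of an L-shape is never diagonal, and that the orientation of any L-shape ancestor is inherited all the way up from $w$.

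With this classification in hand, the counting is immediate. Let $m^\star$ be the largest index for which $w_{m^\star}$ has fewer than three letters, with $m^\star := 0$ if no such index exists. The ancestors $w_{m^\star + 1}, \ldots, w_k$ are pairwise distinct three-letter L-shapes of one fixed orientation, of which there are only $n^3$, so $k - m^\star \leq n^3$. If $m^\star \geq 1$, then $w_{m^\star}$ is a single letter or a horizontal or vertical two-letter word, so Theorem~\ref{thm:twodimvert} gives $m^\star \leq \mathcal{W}_1(b,n,2) = n^2 + 1$. Combining the two inequalities yields $k \leq n^3 + n^2 + 1$, as required; the degenerate case $m^\star = 0$ gives the even tighter bound $k \leq n^3$.
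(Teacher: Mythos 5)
Your proof is correct and follows the same decomposition as the paper's: every ancestor of the L-shape is a single letter, a horizontal or vertical two-letter word, or an L-shape of the same orientation, giving at most $n^3$ L-shape levels followed by at most $n^2+1$ further levels. You supply more detail than the paper does --- in particular the four-case block-boundary argument that rules out diagonal two-letter parents and shows the orientation of the L is inherited, both of which the paper merely asserts.
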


\begin{proof}
For the two possible configurations of $w$, see Figure~\ref{fig:3letterdiagonal}. Suppose $w$ appears for the first time on $L_{k}$. If $w$ is part of a replacement rule of some $1$-letter word $p$ that appears for the first time on $L_{k-1}$, we are solving on $L_{k-1}$ the problem of Lemma~\ref{lemma:oneletterword}. If the parent of $w$ is a $2$-letter word, necessarily horizontal or vertical, the problem reduces to that of Theorem~\ref{thm:twodimvert}. There is, however, one more possible configuration, namely that of a $3$-letter parent $p$ contained within a $\left(2 \times 2\right)$-letter block and shaped as an L with the same rotation as $w$. Parallel to the proof of Lemma~\ref{lemma:twoletterwordonedim}, we can see that this yields a maximum of $n^3-1$ levels before we run out of ordered sets of three letters and reduce to a $1$-letter word or $2$-letter vertical or horizontal word. Afterwards, we have at most $n^2$ levels before we descend to $L_1$. Hence, the latest level on which a $3$-letter set of letters $w$ inside a $(2 \times 2)$-letter box can appear for the first time is $L_{n^3+n^2+1}$.
\end{proof}

We now have all the building blocks in hand and can finally formulate a theorem for an upper bound on the latest level on which any diagonal word can appear for the first time in a two-dimensional grid. We define a function that is an upper bound on the sought $F_{2}^{d}(b,n,w)$.

\begin{definition}
\label{definition:twodimwordfunction}
Let $\mathcal{W}_2 : \mathbb{N}^3 \mapsto \mathbb{N}$ be a map such that
$$\mathcal{W}_2(b, n, |w|)=
\begin{cases}
  n  & |w| = 1, \\
  2n^2 + 1 & |w| = 2 \\
  \lceil \log_{b}(b|w|-b) \rceil +n^2+ n^3 & |w| > 2.
\end{cases}
$$
\end{definition}

\begin{theorem}
\label{thm:twodimdiag}
Given a two-dimensional grid, an $n$-letter alphabet with\linebreak$\left(b \times b\right)$-letter replacement rules, and a diagonal word $w$, an upper bound on $F_{2}^{d}(b,n,w)$ is given by $\mathcal{W}_2(b, n, |w|)$:
\[F_{2}^{d}(b,n,w) \leq \mathcal{W}_2(b, n, |w|).\]
\end{theorem}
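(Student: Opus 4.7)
The plan is to mirror the structure of the proof of Theorem~\ref{thm:onedim}, performing strong induction on the side length of the bounding box rather than directly on word length, with the base case now given by a $(2\times 2)$ bounding box rather than a $2$-letter word. The cases $|w|=1$ and $|w|=2$ follow immediately from Lemma~\ref{lemma:oneletterword} and Lemma~\ref{lemma:twoletterwordtwodimdiag} and match the first two branches of $\mathcal{W}_2$. For $|w|>2$, I would work with the $|w|\times|w|$ bounding box of the diagonal word $w$ and track how it shrinks as we pass to ancestors using Lemma~\ref{lemma:maxparentbox}.

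For the inductive step, suppose the bound holds for all diagonal words $v$ with $|v|\leq b^c+1$, and consider $w$ diagonal with $b^c+1<|w|\leq b^{c+1}+1$. By Lemma~\ref{lemma:maxparentbox}, each side of the parent's bounding box is at most $\lceil(|w|+b-1)/b\rceil\leq b^c+1$. The parent itself need not be a diagonal word; by the shape restriction preceding Corollary~\ref{corollary:threeletterparent}, it is confined to two neighboring diagonals inside this square bounding box. The key point is that Lemma~\ref{lemma:maxparentbox} applies to \emph{any} set of letters, so the bounding-box side continues to shrink under the one-dimensional rule $s\mapsto\lceil(s+b-1)/b\rceil$ at each further step back --- the same shrinkage analyzed in the proof of Theorem~\ref{thm:onedim}. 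Consequently, after $T:=\lceil\log_b(b|w|-b)\rceil-1$ steps back from $L_k$, the ancestor's bounding box has side at most $2$.

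Once the bounding box is $(2\times 2)$ or smaller, the ancestor is either a single letter, a $2$-letter word (horizontal, vertical, or diagonal), or one of the two $3$-letter L-shapes from Corollary~\ref{corollary:threeletterparent}. The binding worst case is the $3$-letter L-shape, which by Lemma~\ref{lemma:twobytwobox} appears for the first time no later than $L_{n^3+n^2+1}$, and every other configuration is dominated by this bound via Lemma~\ref{lemma:oneletterword}, Lemma~\ref{lemma:twoletterwordtwodimdiag}, and Theorem~\ref{thm:twodimvert}. Combining with the $T$ inductive steps back gives $k\leq n^3+n^2+1+T=n^3+n^2+\lceil\log_b(b|w|-b)\rceil=\mathcal{W}_2(b,n,|w|)$, completing the induction.

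I expect the main obstacle to be the bookkeeping that closes out the argument: identifying the iterated ceiling $s\mapsto\lceil(s+b-1)/b\rceil$ with the closed form $\lceil\log_b(b|w|-b)\rceil-1$ sharply enough that the $T$ steps line up with the power-of-$b$ scaling in $\mathcal{W}_2$ (mirroring the calculation of Theorem~\ref{thm:onedim}), and verifying that Lemma~\ref{lemma:twobytwobox} really does dominate every shape a $(2\times 2)$-confined ancestor can take. A subtler point is that the inductive hypothesis must be phrased in terms of bounding-box side rather than diagonal-word length, because intermediate ancestors need not themselves be diagonal words; this is precisely why the bounding-box formalism introduced earlier in this section is indispensable.
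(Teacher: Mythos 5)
Your proposal is correct and takes essentially the same route as the paper: both handle $|w|=1,2$ with Lemmas~\ref{lemma:oneletterword} and \ref{lemma:twoletterwordtwodimdiag}, and for $|w|>2$ both shrink the bounding box to $(2\times 2)$ via Lemma~\ref{lemma:maxparentbox} using the machinery of Theorem~\ref{thm:onedim} (contributing the $\lceil\log_b(b|w|-b)\rceil$ term) and then finish with the $n^3+n^2+1$ bound of Lemma~\ref{lemma:twobytwobox}. Your explicit induction on bounding-box side length and your check that the $2$-letter diagonal case is dominated by the L-shape bound are just slightly more careful renderings of what the paper states informally.
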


\begin{proof}
By Lemmas~\ref{lemma:oneletterword} and \ref{lemma:twoletterwordtwodimdiag}, we have $F_{2}^{d}(b,n,w) \leq n = \mathcal{W}_2(b, n, 1)$ and $F_{2}^{d}(b,n,w) \leq n^2 + 1 = \mathcal{W}_2(b, n, 2)$ as feasible upper bounds on the latest levels on which the diagonal words of lengths $1$ and $2$ can appear.

The key to proving the case for $|w| > 2$ is understanding that unlike in the one-dimensional scenario and unlike in the two-dimensional scenario for vertical and horizontal words, tracing back the parents of parents of a longer diagonal word may never reduce to the problem of a $2$-letter diagonal word. However, we know by Lemma~\ref{lemma:maxparentbox} and Corollary~\ref{corollary:threeletterparent} that we can at least reduce any diagonal word $w$ to a parent contained in a $(2 \times 2)$-letter block on some previous level and that this parent will have no more than $3$ letters.

Now, since both the vertical and horizontal dimensions of the parents of parents of a diagonal word $w$ reduce the same way as the length of a word in the one-dimensional scenario does and since the end goal is $2$ letters in each dimension, we can directly apply the reasoning from the proof of Theorem~\ref{thm:onedim} and conclude that the maximum number of levels required to reach the $\left(2 \times 2\right)$-letter block is given by $\lceil \log_{b}(b|w|-b) \rceil$. However, as observed, the parent contained in this block may have $3$ letters to it. Therefore, after reaching this block, by Lemma~\ref{lemma:twobytwobox}, there will still be up to $n^3+n^2+1$ levels left until definitively reaching $L_1$.
\end{proof}

\section{Solution}
\label{section:solution}

We now have the understanding we need to be able to return to the puzzle solution and connect everything!

Using the language introduced for generalization, the puzzle corresponds to an alphabet of size $26$ and replacement rules of length $b=2$. In Section~\ref{section:introduction}, we last found some words on $L_4$ and noted that writing out every letter on every level would not be an optimal strategy. Indeed, if we spoil that the word $RAUZY$ is hidden furthest on $L_{86}$ and realize that the number of letters in the grid on this level is $11 \times 15 \times 4^{85}$, a number with $54$ digits, we can see that this level alone would require $10^{42}$ terabyte thumb drives just for storage. For reference, at the time of writing, the world's largest single-memory computing system consists of a little over $100$ terabytes.

A more productive way is to solve the puzzle backwards by looking for the possible parents of the missing words, similarly to what we did in Example~\ref{example:CACABA}. As our analysis shows, very fast we get a potential parent contained in a $(2 \times 2)$-letter bounding box and consisting of not more than 3 letters. We also showed that such a parent cannot have more than $26^3 = 17576$ potential $3$-letter parents. This makes searches for all possible parents doable by a program. In reality, the searches are much faster than our bound.

As recalled in the introduction, in some word searches, when one crosses out all the words on the list, one can read a secret message from the remaining letters in the grid. Could this be the case with our puzzle, too? Yes! We can cross out on $L_1$ the words that it contains, as well as those letters whose descendants become parts of the other words from the list located on other levels. Even before we can find all the words from the provided list, the number of remaining letters on level one can be reduced enough for it to be possible to read the secret message: ``SUM EACH WORD'S LEVEL. X MARKS SPOT." This implies that the redundancy of the English language is not enough to solve the puzzle, and we do need to find all the words. 

Once we discover the locations of all the words in the search, we can sum the earliest levels on which these words are found, and this sum will determine the level containing the solution to the puzzle. As the secret message suggests, the solution will be an eventual replacement of the letter $X$, which is not part of any replacement rule and, therefore, only appears once on $L_1$.

Keep in mind that our newly acquired intuition tells us that diagonal words may potentially be hidden later in the puzzle than horizontal and vertical words. Indeed we can see that horizontal grouping $RQ$ expands to a horizontal word $LEVYDRAGON$ on $L_6$. Three more diagonal words $ESCAPE$, $DIMENSION$, and $RAUZY$ appear on $L_{15}$, $L_{17}$, and $L_{86}$ respectively. We see that as we expected diagonal words appear on much later levels in this puzzle than horizontal words.

Finally, by summing the levels of all found words, we determine the level at which to find the solution. And indeed, the $8$-letter answer $HUMPHREY$ appears in the center of level $167$ and is appropriately in the shape of an $X$,

\begin{center}
\stretchit{H**Y}\textcolor{white}{.}\par
\stretchit{*UE*}\textcolor{white}{.}\par
\stretchit{*RM*}\textcolor{white}{.}\par
\stretchit{H**P}.\par
\end{center}

\section{Conclusion}
\label{section:conclusion}

Fractals, a field of study dating back to the 1980s when it was invented by Benoit B. Mandelbrot in \cite{mandelbrot1982fractal}, has since become widely popular, with applications in biology, technology, and beyond. However, the appearance of fractals in a word search puzzle could definitely be considered one of a kind.

An additional parallel to the puzzle studied is the Thue-Morse sequence, which, using our terminology, corresponds to the one-dimensional grid with a $2$-letter alphabet and $2$-letter replacement rules. Suppose the alphabet is $\{0,1\}$, and the rules replace 0 with 01 and 1 with 10. Then, if we start with 0 on $L_1$, we get the Thue-Morse sequence as a limiting sequence. A discussion of the Thue-Morse sequence can be found in \cite{allouche1999ubiquitous}. Hence, the problem we have solved in this paper is not an isolated one but rather one showcasing several areas of mathematics.

We have proven an upper bound on the latest level on which a word $w$ can appear for the first time, given the sizes of the alphabet and the replacement rules, and we have noted that this upper bound is not tight. We have extended our knowledge from the one-dimensional to the two-dimensional scenario and proven two separate upper bounds, one on the latest level on which a vertical or horizontal word can appear for the first time and one on the latest level on which a diagonal word can appear for the first time. Future areas of exploration would be to determine the actual latest levels on which any word can appear in either the one-dimensional or the two-dimensional scenario. Finally, we have used our acquired intuition to provide a solution to Kisman's puzzle.

\section{Acknowledgments}
\label{section:acknowledgements}

The second author thanks Tom Rockicki for encouraging her to look into this topic.

\bibliography{bib}{}
\bibliographystyle{plain}

\end{document}